\newtheorem{theorem}{Theorem}[section]
\newtheorem{lemma}[theorem]{Lemma}
\theoremstyle{definition}
\newtheorem{definition}[theorem]{Definition}
\newtheorem{example}[theorem]{Example}
\newtheorem{proposition}[theorem]{Proposition}
\theoremstyle{remark}
\newtheorem{remark}[theorem]{Remark}
\theoremstyle{conjecture}
\newtheorem{conjecture}[theorem]{Conjecture}
\theoremstyle{corollary}
\newtheorem{corollary}[theorem]{Corollary}
\numberwithin{equation}{section}
\begin{document}

\title{Equivalence of two definitions of set-theoretic Yang-Baxter homology}

\author{J\'{o}zef H. Przytycki}
\address{Department of Mathematics, The George Washington University, Washington DC, USA and University of Gda\'nsk}
\email{przytyck@gwu.edu}
\thanks{The first author was supported in part by the Simons Collaboration Grant-316446, and Dean's Research Chair award.}

\author{Xiao Wang}
\address{Department of Mathematics, The George Washington University, Washington DC, USA}
\email{wangxiao@gwu.edu}

\subjclass[2000]{Primary 57M25, Secondary  18G60}
\date{November 3, 2016}
\keywords{Homology, pre-cubical module, pre-simplicial module, torsion, Yang-Baxter operators }

\begin{abstract}
In 2004, Carter, Elhamdadi and Saito defined a homology theory for set-theoretic Yang-Baxter operators(we will call it the ``algebraic" version in this article).  In 2012, Przytycki defined another homology theory for pre-Yang-Baxter operators which has a nice graphic visualization(we will call it the ``graphic" version in this article).  We show that they are equivalent.  The ``graphic" homology is also defined for pre-Yang-Baxter operators, and we give some examples of it's one-term and two-term homologies.  In the two-term case, we have found torsion in homology of Yang-Baxter operator that yields the Jones polynomial.
\end{abstract}

\maketitle
\section{Introduction}
\markboth{\hfil{\sc Two definitions of set-theoretic Yang-Baxter homology}\hfil}{J\'{o}zef H. Przytycki and Xiao Wang}
The Yang-Baxter equation was introduced independently by C.N. Yang(1967)\cite{Yan} and R.J. Baxter(1972)\cite{Bax}.  It is well known that a certain solution of  Yang-Baxter equation give rise to the Jones polynomial \cite{Jon-2}. In 2004, Carter, Elhamdadi and Saito defined a (co)homology theory for set-theoretic Yang-Baxter operators, from which they gave a way to generate link invariants, cocycle invariants\cite{CES}. In 2012, Przytycki gave a graphical definition of homology for a pre-Yang-Baxter operator \cite{Prz-2}.  We provide the definitions of two homology theories for set-theoretic Yang-Baxter operators in Section 2 and show their equivalence in Section 3.  In Section 4, we give definitions of one-term and two-term homology of pre-Yang-Baxter operators, and  show examples, in particular, we find torsion in two-term homology of Yang-Baxter operator that yields the Jones polynomial.
We start from basic definitions.
\begin{definition}\label{Definition 1.1}
Let $k$ be a commutative ring and $X$ be a set.  Consider $V=kX,$ the free $k-$module generated by $X$.  If a $k-$linear map, $R:$ $V\otimes V \to V\otimes V$, satisfies the following equation\\
$(R\otimes Id_{V})\circ (Id_{V}\otimes R)\circ (R\otimes Id_{V})=(Id_{V}\otimes R)\circ (R\otimes Id_{V})\circ (Id_{V}\otimes R),$\ \\
then we say $R$ is a pre-Yang-Baxter operator. The equation above is called a Yang-Baxter equation.  If, in addition, $R$ is invertible, then we say $R$ is a Yang-Baxter operator.  From now on, we assume $k$ is a commutative ring with identity whenever we deal with Yang-Baxter operators.
\end{definition}

\begin{definition}\label{Definition 1.2}
Let $X$ be a set.  If $R:$ $X\times X \to X\times X$ is a function that satisfies \\
$(R\times Id_{X})\circ (Id_{X}\times R)\circ (R\times Id_{X})=(Id_{X}\times R)\circ (R\times Id_{X})\circ (Id_{X}\times R),$\\
then we say $R$ is a pre-set-theoretic Yang-Baxter operator and the equation above is a set-theoretic Yang-Baxter equation. If, in addition, $R$ is invertible, then we say $R$ is a set-theoretic Yang-Baxter operator.
\end{definition}

Pre-set-theoretic Yang-Baxter operator leads to Yang-Baxter operator by putting $V=kX,$ and extending $R:$ $X\times X \to X\times X$ to $R:$ $V\otimes V \to V\otimes V$.  This Yang-Baxter operator is still called pre-set-theoretic Yang-Baxter operator.

\section{Set-theoretic Yang-Baxter homology theories}

Given a pre-set-theoretic Yang-Baxter operator $R,$ we have two approaches to homology built on $R$. The ``algebraic" version defined in \cite{CES} and the ``Graphic" version in \cite{Prz-2}.  We discuss them in the next two subsections.  We prove the equivalence of them in Section 3.

We first review the ``algebraic" version of set-theoretic Yang-Baxter homology theory based on\cite{CES} and then introduce the ``graphic" version of set-theoretic Yang-Baxter homology theory.

\subsection{``Algebraic" homology of Carter, Elhamdadi, and Saito}

\begin{definition}\label{Definition 2.1.1}
The set $X$ together with a set-theoretic Yang-Baxter operator $R$ , $(X,R), $ is called in \cite{CES} a Yang-Baxter set.  We represent a function $R$ by $R(x_{1},x_{2})=(R_{1}(x_{1},x_{2}),R_{2}(x_{1},x_{2})).$

We use the following notation. Let $\mathcal{I}_{n}$ be the $n-$dimensional cube $I^{n}$ ($I=[0,1]$) regarded as a CW (cubical) complex, where $n$ is a positive integer.\footnote{We deal here with a co-pre-cubic set ($X_{k},d^{i}_{\epsilon})$  where $X_{k}=I^{k}$ and co-face maps $d^{i,k-1}_{\epsilon}:X_{k-1} \to X_{_{k}} $ defined by $d^{i}_{\epsilon}(x_{1},x_{2},...,x_{k-1})=(x_{1},x_{2},...,x_{i-1},\epsilon,x_{i},...x_{k-1})$; they satisfy $d^{j}_{\delta}d^{i}_{\epsilon}=d^{i}_{\epsilon}d^{j-1}_{\delta}$ where $i<j.$} Denote the $k-$skeleton by $\mathcal{I}^{(k)}_{n}$ with orientation given by the order of coordinate axes. In particular, every $2-$face can be written as $ {\epsilon_{1}} \times \cdots \times {\epsilon_{i-1}} \times I_{i} \times {\epsilon_{i+1}} \times \cdots \times{\epsilon_{j-1}} \times I_{j}\times{\epsilon_{j+1}} \times \cdots \times{\epsilon_{n}},$ for some $1\leq i <j\leq n,$ where $\epsilon_{k}=0$ or $1,$ and $I_{i},$ $I_{j}$ denote two copies of $I$ at the $i$th, $j$th positions, respectively.
\end{definition}

\begin{definition}\label{Definition 2.1.2}
The Yang-Baxter coloring of $\mathcal{I}_{n}$ by a Yang-Baxter set $(X,R)$ is a map $L: E(\mathcal{I}_{n}) \to X$, where $ E(\mathcal{I}_{n})$ denotes the set of edges of   $\mathcal{I}_{n},$ with each edge oriented as above, such that if

 $L({\epsilon_{1}} \times \cdots \times \times I_{i} \times  \cdots \times 0_{j} \times \cdots \times{\epsilon_{n}})=x,$

$L({\epsilon_{1}} \times \cdots \times \times 1_{i} \times  \cdots \times I_{j} \times \cdots \times{\epsilon_{n}})=y,$ 

then

$L({\epsilon_{1}} \times \cdots \times \times 0_{i} \times  \cdots \times I_{j} \times \cdots \times{\epsilon_{n}})=R_{1}(x,y),$

$L({\epsilon_{1}} \times \cdots \times \times I_{i} \times  \cdots \times 1_{j} \times \cdots \times{\epsilon_{n}})=R_{2}(x,y),$\\ \

\centerline{\includegraphics[scale=0.5]{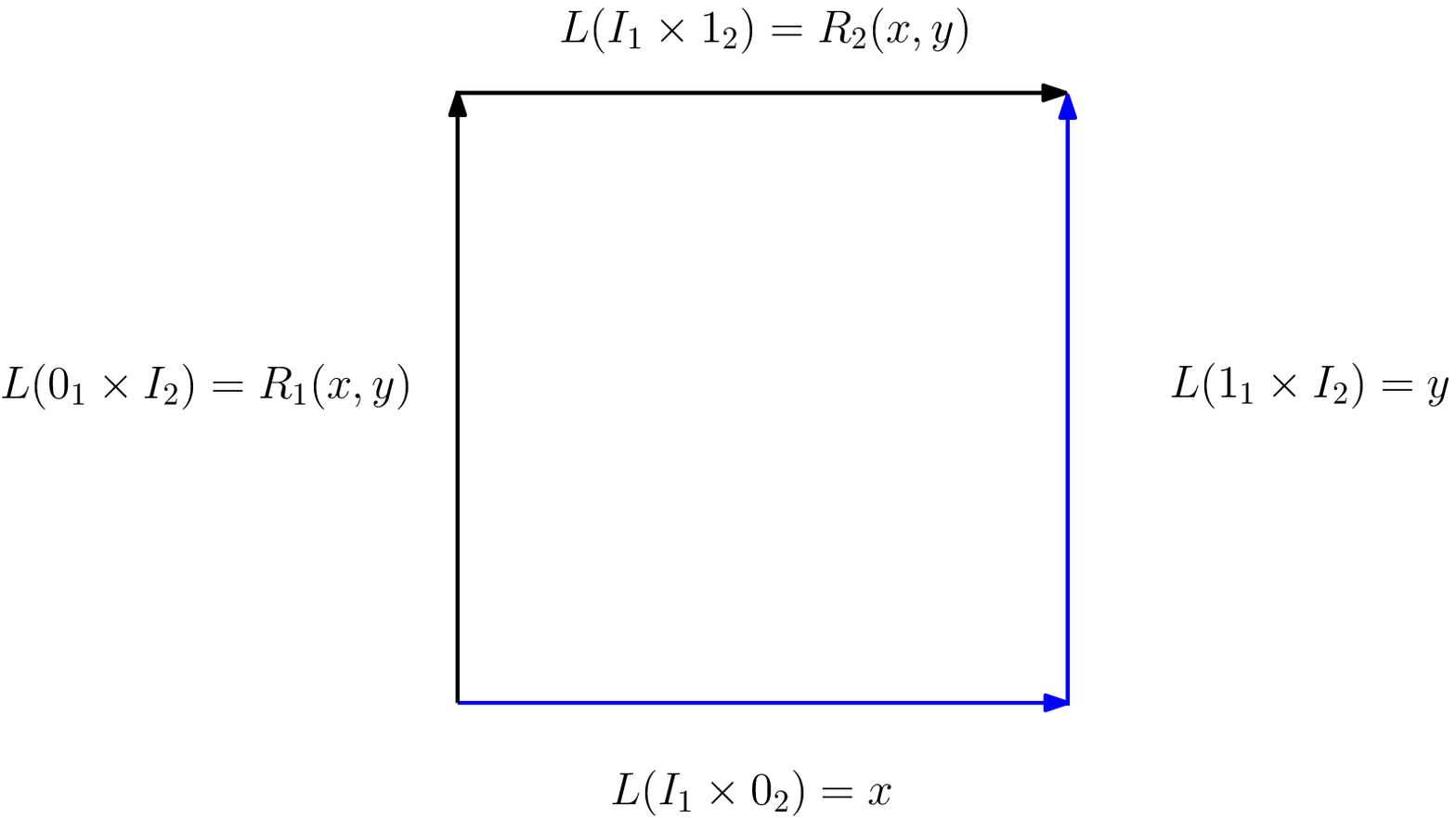}  }
\centerline{\footnotesize{Figure. 1.1; local behavior of Yang-Baxter coloring}}

\end{definition}\ \\

\begin{definition}\label{Definition 2.1.3}
The initial path in $\mathcal{I}_{n}$ is the sequence of edges of $\mathcal{I}_{n}$,$(e_{1},\cdots,e_{n}),$ where

$e_{1}=I_{1} \times 0_{2} \times \cdots \times 0_{n},$

$e_{2}=1_{1} \times I_{2} \times 0_{3} \times \cdots \times 0_{n},$

$\vdots$

$e_{n}=1_{1} \times 1_{2} \cdots \times 1_{n-1} \times I_{n},$

\end{definition}

\begin{lemma}\label{Lemma 2.1.4} (S.Carter, M. Elhamdadi, and M.Saito, 2004): 

Let $(X,R)$ be a Yang-Baxter set, and $(e_{1},\dots,e_{n})$ be the initial path of $\mathcal{I}_{n}$. For any n-tuple of elements of $X$, $(x_{1},\dots,x_{n}),$ there exists a unique Yang-Baxter coloring $L$ of $\mathcal{I}_{n}$ by $(X,R)$ such that $L(e_{i})=x_{i}$ for all $i=1,\dots,n.$
\end{lemma}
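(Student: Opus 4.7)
The plan is to propagate the coloring from the initial path to the rest of $\mathcal{I}_{n}$ by means of ``swap moves'' along monotone lattice paths from $(0,\dots,0)$ to $(1,\dots,1)$. Every such monotone path uses each of the $n$ coordinate directions exactly once, so the set of monotone paths is in bijection with $S_{n}$; the initial path $(e_{1},\dots,e_{n})$ corresponds to the identity permutation. If the $k$-th and $(k{+}1)$-st edges of some monotone path point in distinct directions $i$ and $j$, they span a $2$-face, and the coloring condition of Definition 2.1.2, combined with invertibility of $R$, permits a unique \emph{swap}: the two edges are replaced by the opposite pair on that $2$-face (in the reversed direction-order), with new labels obtained by applying $R$ or $R^{-1}$ (depending on whether $i<j$ or $i>j$) to the old ones. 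Since adjacent transpositions generate $S_{n}$, every monotone path is reachable from the initial one by a sequence of swaps, and consequently every edge of $\mathcal{I}_{n}$ eventually lies on some reached monotone path.

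Uniqueness is then immediate: any Yang-Baxter coloring $L$ with $L(e_{i})=x_{i}$ is forced on every other edge, because the label along each edge of a swap-reachable monotone path is determined inductively by the $2$-face condition. For existence I would \emph{define} $L$ by this propagation procedure and must verify that the label assigned to an edge is independent of the particular swap sequence used. Using the standard Coxeter presentation
\[
S_{n}=\langle s_{1},\dots,s_{n-1}\mid s_{k}^{2}=e,\ s_{k}s_{\ell}=s_{\ell}s_{k}\text{ for }|k-\ell|\ge 2,\ s_{k}s_{k+1}s_{k}=s_{k+1}s_{k}s_{k+1}\rangle,
\]
it suffices to check that the swap operation respects these three families of relations. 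The involution $s_{k}^{2}=e$ reduces to the tautology that a swap followed by its reverse uses $R$ and $R^{-1}$ in succession. The far-commutation $s_{k}s_{\ell}=s_{\ell}s_{k}$ for $|k-\ell|\ge 2$ holds trivially because the two swaps alter disjoint pairs of edges of the path.

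The main obstacle is the braid relation $s_{k}s_{k+1}s_{k}=s_{k+1}s_{k}s_{k+1}$, which corresponds exactly to the hexagonal consistency on a $3$-face of $\mathcal{I}_{n}$. Freezing the $n-3$ coordinates not used in path positions $k, k{+}1, k{+}2$, the three directions appearing there span a $3$-subcube whose six monotone paths are arranged as a hexagon under swaps. Computing the labels obtained by traversing this hexagon via the two triple-swap routes yields precisely
\[
(R\times Id)\circ (Id\times R)\circ (R\times Id)\qquad\text{versus}\qquad (Id\times R)\circ (R\times Id)\circ (Id\times R)
\]
applied to the input labels, so the two results coincide by the set-theoretic Yang-Baxter equation of Definition 1.2. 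This confirms the well-definedness of $L$ on every edge. Finally, the $2$-face coloring condition then holds at every $2$-face by construction, since each $2$-face arises as the swap-face of some monotone path reachable from the initial one, and the swap rule is exactly the $2$-face rule.
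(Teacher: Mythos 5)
The paper itself does not prove this lemma: it is imported from \cite{CES} and used as a black box, so there is no in-paper argument to measure yours against. Judged on its own terms, your proof is essentially correct, and it correctly isolates the crux: the only nontrivial consistency condition is the hexagon of monotone paths on a $3$-face, where the two triple-swap routes act on the labels by $(R\times Id)\circ(Id\times R)\circ(R\times Id)$ versus $(Id\times R)\circ(R\times Id)\circ(Id\times R)$, so well-definedness is exactly the set-theoretic Yang-Baxter equation. This square-by-square propagation is also very much in the spirit of the paper's own proof of Theorem 3.1, which colors the faces of $\mathcal{I}_{n}$ by iterating the $2$-face rule along a ``climbing curtain,'' so your route is a natural companion to the paper rather than a departure from it.

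Two steps deserve tightening. First, verifying the Coxeter relations shows that the labelling of the monotone path $P_{\sigma}$ depends only on $\sigma$, not on the swap sequence used to reach it; to conclude that a single \emph{edge} is consistently labelled you still need that any two monotone paths through a given edge $e$ assign $e$ the same label. This is true --- such paths are connected by swaps performed entirely within the sub-faces preceding and following $e$, hence never touching $e$ --- but it is a separate (if easy) step that your ``it suffices to check the relations'' elides. Second, the braid relation must be checked at all six vertices of the hexagon, not only at the increasing order of the three directions; the remaining five cases follow from the increasing one by conjugating with $R^{\pm1}$, and this is where invertibility of $R$ (available since $(X,R)$ is a Yang-Baxter set) is genuinely used. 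Alternatively, restricting to reduced words and invoking Tits' theorem (any two reduced words are connected by braid and far-commutation moves alone) lets you dispense with both $R^{-1}$ and the relation $s_{k}^{2}=e$, and shows in passing that existence and uniqueness of the coloring already hold for pre-set-theoretic Yang-Baxter operators, with invertibility needed only to propagate backwards.
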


This lemma gives the following two properties.
\begin{enumerate}

\item Each edge has the color uniquely induced by the $n-$tuple asociated to the initial path of $\mathcal{I}_{n}.$

\item Each $k-$face $\mathcal{J}$ of $\mathcal{I}_{n}$ has its induced initial path determined by the order of coordinates. Therefore, we can associate to it the $k-$tuple $(y_{1},\dots,y_{k})$ determined by colors on its induced initial path. Denote this situation by $L(\mathcal{J})=(y_{1},\dots,y_{k})$

\end{enumerate}

From these two facts, we have a way to map an $n-$tuple to $(n-1)-$tuple through the face maps in cubic homology theory.

Recall that $\partial ^{C}_{n}$ denotes the $n-$dimensional boundary map in the cubical homology theory. Thus $\partial ^{C}_{n}(\mathcal{I}_{n})= \sum_{i=1}^{2n} \epsilon_{i}\mathcal{J}_{i},$ where $\mathcal{J}_{i}$ is an $(n-1)-$face and $\epsilon_{i}=\pm 1$ depending on whether the orientation of $\mathcal{J}_{i}$ matches the induced orientation.  For the induced orientation, we take the convention that the inward pointing normal to an $(n-1)-$face appears last in a sequence of vectors that specifies an orientation, and the orientation of the $(n-1)-$face is chosen so that this sequence agrees with the orientation of the $n-$cube.

Let $(X,R)$ be a Yang-Baxter set. Let $C^{YB}_{n}(X)$ be the free abelian group generated by $n-$tuples $(x_{1},\dots,x_{n})$ of elements of $X.$  Define a homomorphism $\partial^{A} _{n}:C^{YB}_{n}(X) \to C^{YB}_{n-1}(X)$ by $\partial^{A} _{n}((x_{1},\dots,x_{n}))=L(\partial^{C}_{n}(\mathcal{I}_{n}))=\sum_{i=1}^{2n} \epsilon_{i}L(\mathcal{J}_{i}).$ We have $\partial^{A}_{n-1} \circ \partial^{A}_{n}=0,$ and $(C^{YB}_{*}(X),\partial^{A}_{n})$  is a chain complex.  As usual, we can define $H^{A}_{n}=ker\partial^{A}_{n}/im\partial^{A}_{n+1}$ to be the ``algebraic" version of  Yang-Baxter homology group \cite{CES}.

\subsection{``Graphic" approach to Yang-Baxter homology}

In this homology theory, the chain groups are the same as before, that is $C^{YB}_{n}(X)=ZX^{n}.$  We define the boundary homomorphism $\partial^{G}_{n}:C^{YB}_{n}(X)\to C^{YB}_{n-1}(X)$ as follow,  $\partial^{G}_{n}=\sum_{i=1}^{n} (-1)^{i}d_{i,n},$ where $d_{i,n}=d^{l}_{i,n}-d^{r}_{i,n}$. 
 We can interpret the face maps through Figure 2.1\ \\

\centerline{\includegraphics[scale=0.8]{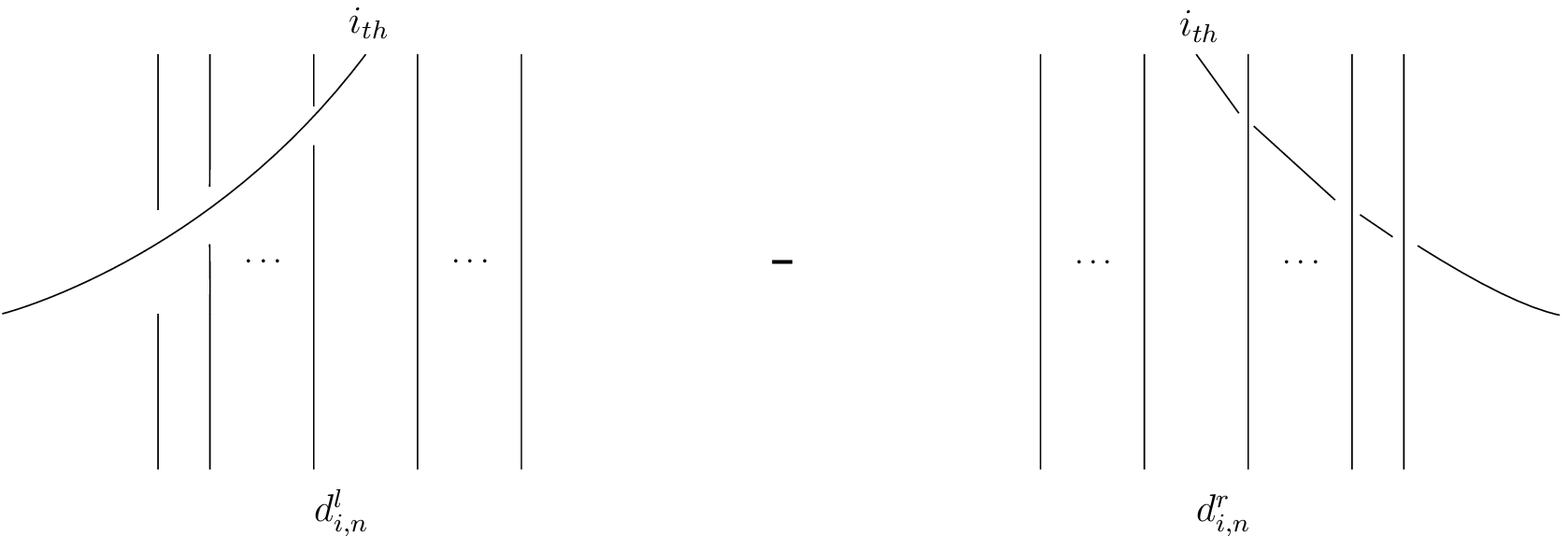} }
\centerline{\footnotesize{Figure. 2.1; a face map $d_{i,n}$}}\ \\
The meaning of $d^{l}_{i,n},$ is illustrated in Figure 2.2; $d^{r}_{i,n}$ can be described similarly.\ \\

\centerline{\includegraphics[scale=0.58]{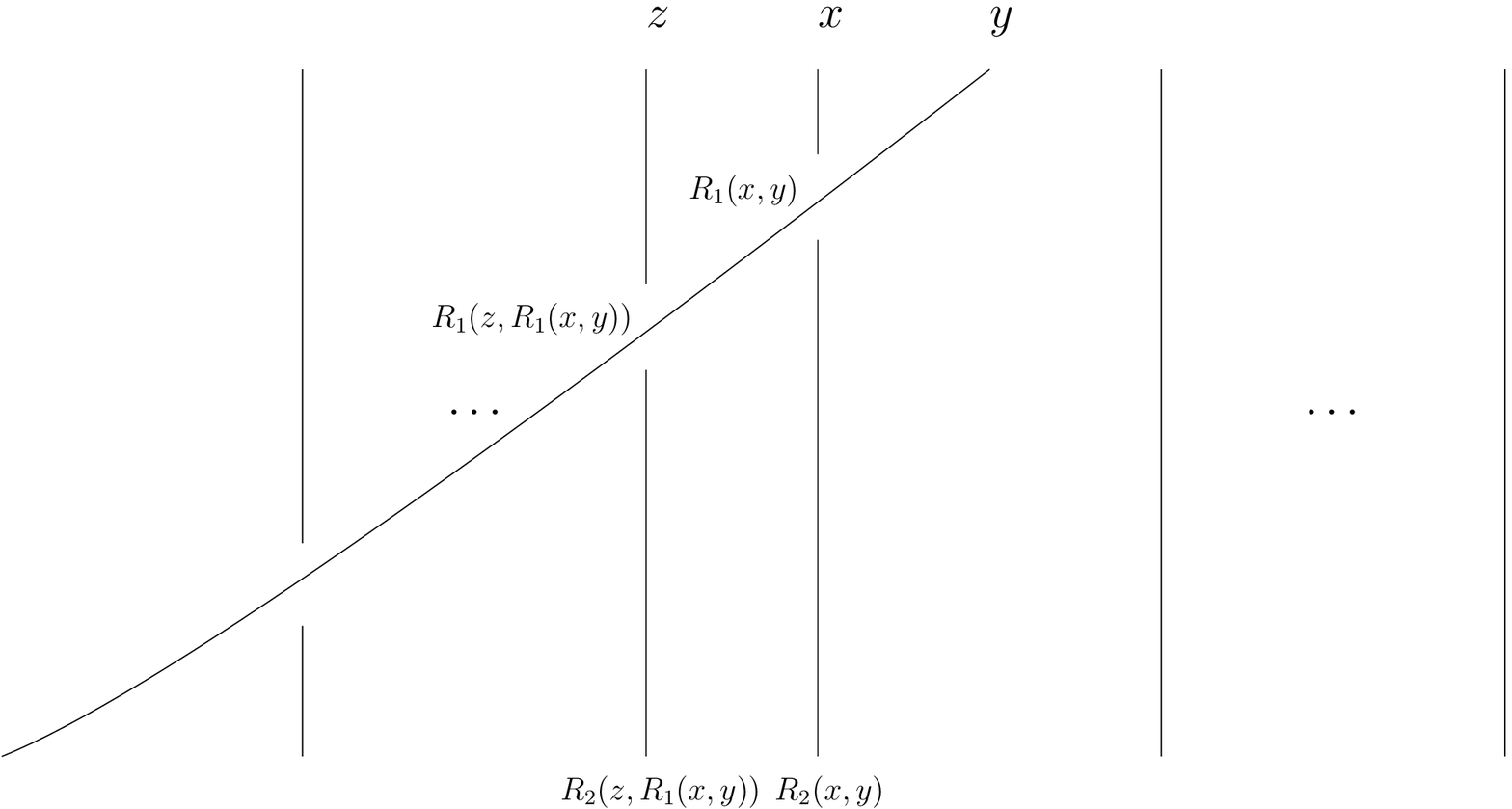} }
\centerline{\footnotesize{Figure. 2.2; a face map $d_{i,n}^{l}$}}\ \\

We have an $n-$tuple as an input and each strand carries the corresponding element of the $n-$tuple.  We track down the graph from top to bottom, and at each crossing we apply the fixed Yang-Baxter operator with input the ordered pair consists of two elements carried by the two strands right above the crossing(as in Figure 2.3).\\ \ 

\centerline{\includegraphics[scale=0.8]{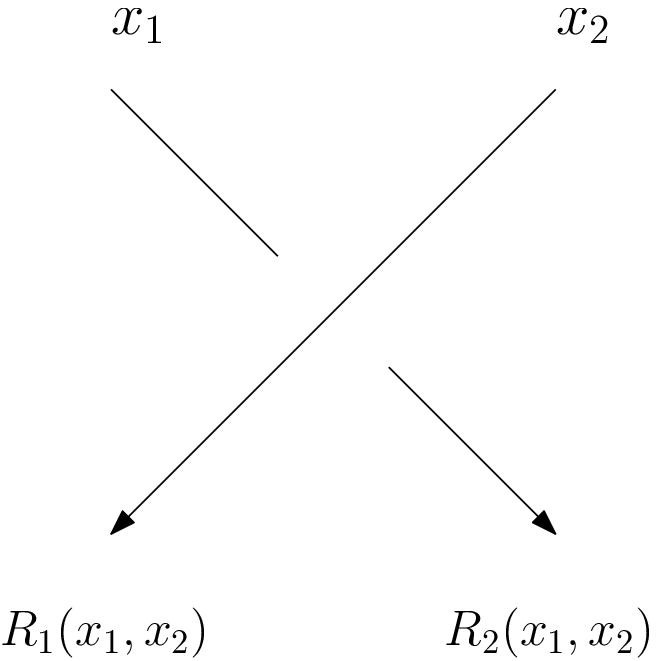} }
\centerline{\footnotesize{Figure. 2.3; encodding $R$ at each crossing}}
 Then the left strand after the crossing carries the $R_{1}$ function value and the right strand after the crossing carries the $R_{2}$ function value.  In the end, we ignore the element carried by the left most strand, and this procedure generates an $(n-1)-$tuple consisting of the $n-1$ elements carried by the other $n-1$ strands at the bottom.

One can easily check $(X^{n},d_{i,n}^{\epsilon})$ form a pre-cubic set, which implies that $(C^{YB}_{*}(X),\partial^{G}_{n})$ is a chain complex.  We define $H^{G}_{n}=ker\partial^{G}_{n}/im\partial^{G}_{n+1}$ as the ``graphic" version of Yang-Baxter homology group.

\section{Equivalence of two homology theories}

\begin{theorem}
The ``algebraic" and ``graphic" definitions of Yang-Baxter homology coincide.  More generally, chain complexes leading to above homologies are isomorphic.
\end{theorem}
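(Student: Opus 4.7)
The chain groups are literally identical: $C^{YB}_n(X)=ZX^n$ in both theories. The plan is therefore to produce a diagonal chain isomorphism $\phi=\{c_n\cdot\operatorname{Id}\}$ with $c_n\in\{\pm1\}$, by comparing the two boundary maps face by face on a single generator $(x_1,\ldots,x_n)$. This will be done in three steps: enumerate the codimension-one faces of $\mathcal{I}_n$, identify the colored $L$-value on each such face with one of the graphical face maps, and reconcile signs.

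Each codimension-one face of $\mathcal{I}_n$ has the form $\mathcal{J}^{i,\epsilon}$ for $1\le i\le n$ and $\epsilon\in\{0,1\}$, obtained by fixing the $i$-th coordinate to $\epsilon$. The central claim is the identification
\begin{align*}
L(\mathcal{J}^{i,0}) &= d^{r}_{i,n}(x_1,\ldots,x_n),\\
L(\mathcal{J}^{i,1}) &= d^{l}_{i,n}(x_1,\ldots,x_n).
\end{align*}
To verify this for $\mathcal{J}^{i,0}$: its induced initial path starts at $(0,\ldots,0)$ and shares its first $i-1$ edges with the initial path of $\mathcal{I}_n$, so the first $i-1$ colors are $x_1,\ldots,x_{i-1}$. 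The remaining $n-i$ edges lie in the hyperplane $\{x_i=0\}$; their colors are computed by successively propagating the Yang-Baxter coloring across the $2$-faces of $\mathcal{I}_n$ spanned by the $i$-th and $(i+1)$-th, then $i$-th and $(i+2)$-th, $\ldots$, coordinate directions (transported along the already-computed path). Each propagation step is one application of $R$ as in Figure 2.3, and corresponds exactly to the $i$-th strand crossing the next strand to its right in the planar diagram defining $d^r_{i,n}$. Iterating for $n-i$ steps reproduces the full composition depicted in Figure 2.2. The identification for $\mathcal{J}^{i,1}$ is symmetric: its induced initial path begins at $(0,\ldots,0,1_i,0,\ldots,0)$, and one propagates colors by pushing the edge $e_i$ past $e_{i-1},\ldots,e_1$ in succession, reproducing $d^l_{i,n}$. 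Well-definedness of the propagation is guaranteed by Lemma 2.1.4, which encodes the Yang-Baxter equation as path-independence of the coloring.

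The orientation convention stated after Definition 2.1.3 assigns to the face $\mathcal{J}^{i,\epsilon}$ the coefficient $(-1)^{n-i+\epsilon}$ in $\partial^{C}\mathcal{I}_n$ (since for $\epsilon=0$ the inward normal is $+e_i$, requiring $n-i$ transpositions to reach the end of the coordinate frame, and for $\epsilon=1$ a further sign flip occurs). Combining with the identification above,
$$\partial^{A}_n(x_1,\ldots,x_n)=\sum_{i=1}^n(-1)^{n-i}\bigl(d^{r}_{i,n}-d^{l}_{i,n}\bigr)(x_1,\ldots,x_n)=(-1)^{n+1}\partial^{G}_n(x_1,\ldots,x_n).$$
Setting $c_0=1$ and $c_n=(-1)^{n+1}c_{n-1}$, the family $\phi=\{c_n\cdot\operatorname{Id}\}$ then intertwines $\partial^G$ and $\partial^A$ and delivers the desired chain isomorphism.

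The main obstacle is the face-coloring identification: one must set up a precise correspondence between the stepwise propagation of the Yang-Baxter coloring across $2$-faces of $\mathcal{I}_n$ and the planar crossings that define $d^l_{i,n}$ and $d^r_{i,n}$. The underlying combinatorics is transparent, but the bookkeeping is delicate; path-independence of the propagation is assured by Lemma 2.1.4, and the calculation reduces to an induction on $n-i$ that is easily verified in small cases such as $n=2,3$. Sign reconciliation is then routine.
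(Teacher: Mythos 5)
Your proposal is correct and follows essentially the same route as the paper's proof: both identify the colored face $L(\mathcal{J}^{i,\epsilon})$ with $d^{r}_{i,n}$ (for $\epsilon=0$) or $d^{l}_{i,n}$ (for $\epsilon=1$) by propagating the Yang--Baxter coloring across successive $2$-faces (the paper carries this out explicitly via its auxiliary edge sequences $a_{j}$, $b_{j}$), and both arrive at the sign $(-1)^{n-i}(1-2\epsilon)$, hence $\partial^{A}_{n}=(-1)^{n+1}\partial^{G}_{n}$. Your explicit alternating-sign chain isomorphism $\{c_{n}\cdot\mathrm{Id}\}$ merely makes precise the paper's concluding remark that the two boundary maps differ only by a global sign.
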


\begin{proof}

Consider an $n-$dimensional cube $I^{n}$ and denote it as $I_{1}\times \cdots \times I_{n}.$ For any coloring, say $L(I_{1}\times \cdots \times I_{n})=(x_{1}, \dots, x_{n}),$ and an $(n-1)-$face $\mathcal{J}=I_{1}\times \cdots \times I_{i-1} \times 1_{i} \times I_{i+1} \times \cdots \times I_{n},$ we are going to demonstrate that $L(\mathcal{J})=d^{l}_{i,n}(x_{1}, \dots, x_{n}).$ To see this, we need to calculate the coloring of the initial path $(a_{1}, \dots, a_{n-1})$ of this $(n-1)-$face. By definition,\ \\

$a_{1}=  I_{1}\times 0_{2} \times 0_{3} \times \cdots \times 0_{i-1} \times 1_{i} \times 0_{i+1} \times \cdots \times 0_{n}$

$a_{2}=  1_{1}\times I_{2} \times 0_{3} \times \cdots \times 0_{i-1} \times 1_{i} \times 0_{i+1} \times \cdots \times 0_{n}$

$\vdots$

$a_{i-1}=1_{1}\times 1_{2} \times 1_{3} \times \cdots \times I_{i-1} \times 1_{i} \times 0_{i+1} \times \cdots \times 0_{n}$

$a_{i}=  1_{1}\times 1_{2} \times 1_{3} \times \cdots \times 1_{i-1} \times 1_{i} \times I_{i+1} \times 0_{i+2} \times \cdots \times 0_{n}$

$a_{i+1}=1_{1}\times 1_{2} \times 1_{3} \times \cdots \times 1_{i-1} \times 1_{i} \times 1_{i+1} \times I_{i+2} \times \cdots \times 0_{n}$

$\vdots$

$a_{n-2}=1_{1}\times 1_{2} \times 1_{3} \times \cdots \times 1_{i-1} \times 1_{i} \times 1_{i+1} \times \cdots \times I_{n-1} \times 0_{n}$

$a_{n-1}=1_{1}\times 1_{2} \times 1_{3} \times \cdots \times 1_{i-1} \times 1_{i} \times 1_{i+1} \times \cdots \times 1_{n-1} \times I_{n}$\ \\

We need another sequence $(b_{2}, \dots, b_{i}),$ where \ \\

$b_{2}=1_{1}\times 0_{2} \times 0_{3} \times \cdots \times 0_{i-2} \times 0_{i-1} \times I_{i} \times 0_{i+1} \times \cdots \times 0_{n}$

$b_{3}=1_{1}\times 1_{2} \times 0_{3} \times \cdots \times 0_{i-2} \times 0_{i-1} \times I_{i} \times 0_{i+1} \times \cdots \times 0_{n}$

$\vdots$

$b_{i-1}=1_{1}\times 1_{2} \times 1_{3} \times \cdots \times 1_{i-2} \times 0_{i-1} \times I_{i} \times 0_{i+1} \times \cdots \times 0_{n}$

$b_{i}=1_{1}\times 1_{2} \times 1_{3} \times \cdots \times 1_{i-2} \times 1_{i-1} \times I_{i} \times 0_{i+1} \times \cdots \times 0_{n}$\ \\

For example, for $j=i,$ the edges of the square are\ \\

$e_{i-1}=1_{1}\times 1_{2} \times 1_{3} \times \cdots \times I_{i-1} \times 0_{i} \times 0_{i+1} \times \cdots \times 0_{n}$

$b_{i}=e_{i}=1_{1}\times 1_{2} \times 1_{3} \times \cdots \times 1_{i-2} \times 1_{i-1} \times I_{i} \times 0_{i+1} \times \cdots \times 0_{n}$

$b_{i-1}=1_{1}\times 1_{2} \times 1_{3} \times \cdots \times 1_{i-2} \times 0_{i-1} \times I_{i} \times 0_{i+1} \times \cdots \times 0_{n}$

$a_{i-1}=1_{1}\times 1_{2} \times 1_{3} \times \cdots \times I_{i-1} \times 1_{i} \times 0_{i+1} \times \cdots \times 0_{n}$\ \\

Since \ \\

$L(e_{i-1})=x_{i-1},$ $L(b_{i})=x_{i}$\ \\

we have\ \\

$L(a_{i-1})=R_{2}(L(e_{i-1}),L(b_{i}))=R_{2}(x_{i-1},x_{i})$

$L(b_{i-1})=R_{1}(L(e_{i-1}),L(b_{i}))=R_{1}(x_{i-1},x_{i})$\ \\

Once we know the color of $b_{j},$ we know the colors of $b_{j-1},$ and $a_{j-1}$ (at each iteration we deal with a square in Figure 3.1).  Thus, recursively, we know all the colors of $a_{j}s'$ i.e. we get an $(n-1)-$tuple. \ \\

\centerline{\includegraphics[scale=0.5]{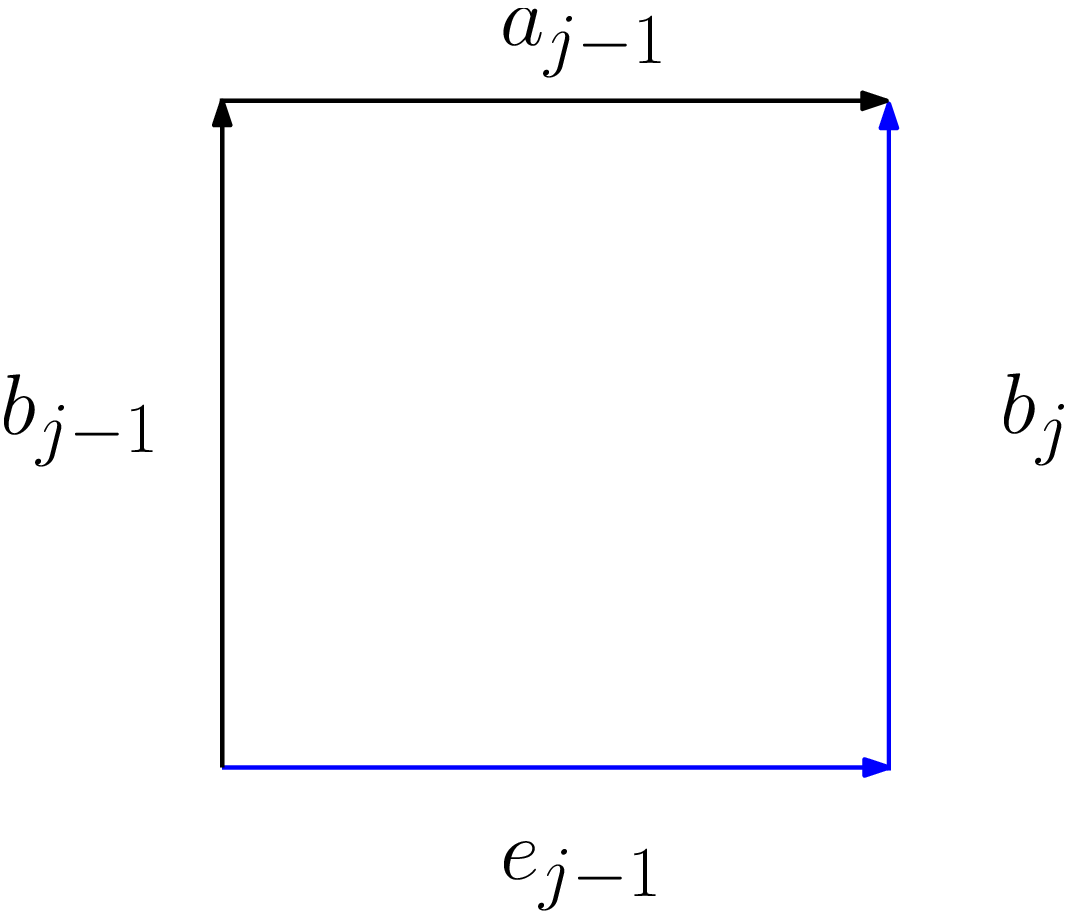} }
\centerline{\footnotesize{Figure. 3.1}}\ \\

In general, we have

$$ L(a_{j})=\left\{
\begin{array}{rcl}
R_{2}(L(e_{j}),L(b_{j+1}))       &      & {1\leq j \leq i-1}\\
x_{j+1}    &      & {i \leq j \leq n-1}

\end{array} \right. $$

$$ L(b_{j})=\left\{
\begin{array}{rcl}
R_{1}(L(e_{j}),L(b_{j+1}))       &      & {2\leq j \leq i-1}\\
x_{j}    &      & {j=i}

\end{array} \right. $$

We can see that this $(n-1)-$tuple is the same as the one given by $d^{l}_{i,n}$(compare with Figure 2.2).

Similarly, if we consider $\mathcal{J}=I_{1}\times \cdots \times I_{i-1} \times 0_{i} \times I_{i+1} \times \cdots \times I_{n},$ we can show $L(\mathcal{J})=d^{r}_{i,n}(x_{1}, \dots, x_{n})$.

As for the sign, we can directly calculate that, for $L(I_{1}\times \cdots \times I_{i-1} \times \epsilon_{i} \times I_{i+1} \times \cdots \times I_{n}),$ the sign is $(-1)^{n-i}(1-2\epsilon_{i}).$

Thus, the boundary map of ``algebraic" version and the boundary map of ``graphic" version only differ by a global sign, therefore, the considered chain complexes are isomorphic and they give isomorphic homology groups.
\end{proof}

\begin{example}\label{Example 3.1}

Comparison of the face maps corresponding to the $2-$face $I_{1}\times I_{2}\times 1_{3}$ of a cube and $d^{l}_{3,3}$.

\begin{minipage}[c]{8cm}
\includegraphics[scale=0.5]{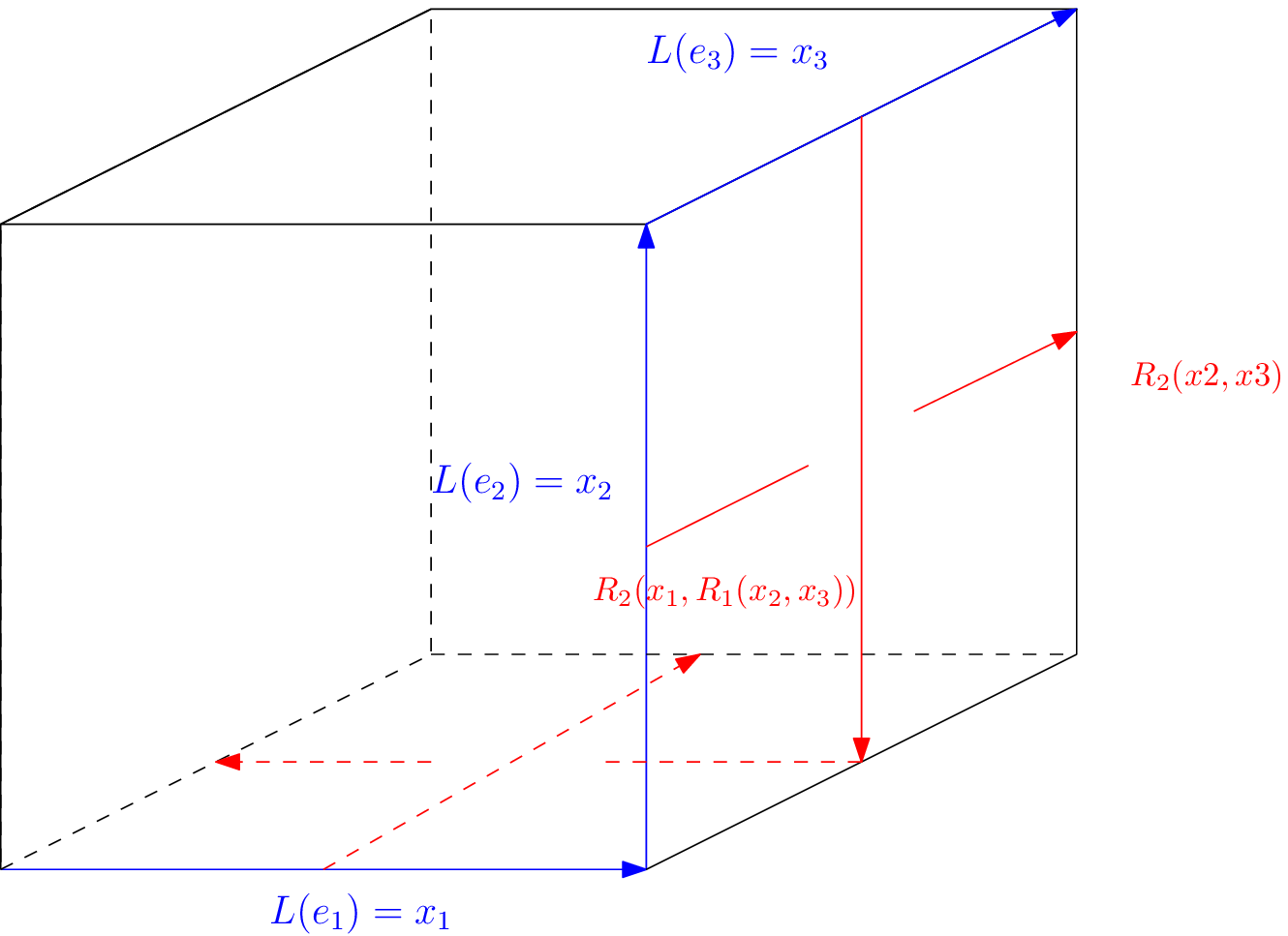} 
 \\
\footnotesize{Figure. 3.2; coloring of face $I\times I\times 1$}
\end{minipage} 
\begin{minipage}[c]{5cm}
\includegraphics[scale=0.5]{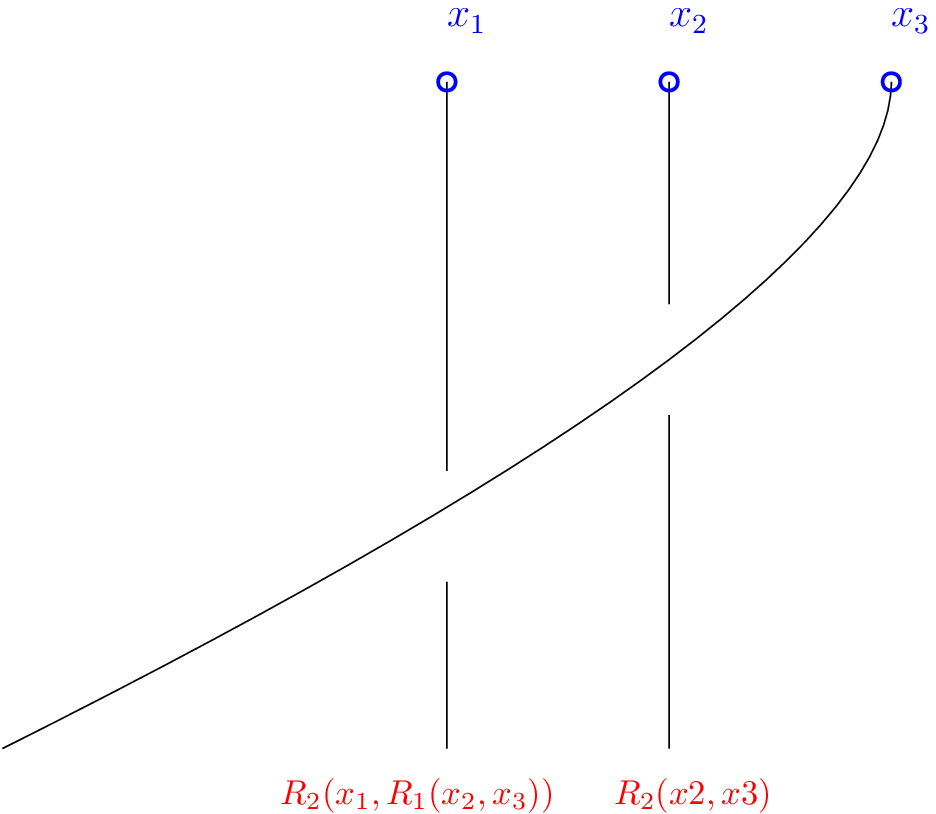} 
 \\
\footnotesize{Figure. 3.3; face map $d^{l}_{3,3}$}
\end{minipage} \ \\

From the figure above, we can see a ``curtain-like" object similar to the figure on the right hand side ``climbing" on the faces of the cube.  For higher dimensions, the similar consideration holds.  Therefore this also gives a way to visualize the equivalence.

\end{example}

\section{Homology for unital Yang-Baxter operators}

This ``graphic" definition of Yang-Baxter homology was motivated by the homology theory of self-distributive systems \cite{Prz-1,Prz-2}, for example shelves, racks, and quandles.  Similarly, we can define one-term and two-term homology not only for set-theoretic Yang-Baxter operators but also for pre-Yang-Baxter operators.  Furthermore, we can extend the definition of pre-Yang-Baxter homology to pre-Yang-Baxter operators with Yang-Baxter wall (see Definition 4.2). We will give the general definitions below and discuss some properties of these homology theories.

\subsection{One-term Yang-Baxter homology}

Let $k$ be a commutative ring with identity, $V=kX$ be a free $k$ module with basis $X$ and $M$ be a right $k-$module.  We define in Definition 4.2 the one-term pre-Yang-Baxter chain complex $\mathcal{C}^{YB}=(C_{n},M,\partial_{n})$ from the pre-simplicial module $(C_{n},M,d_{i})$. 

First we recall the notion of a pre-simplicial module.
\begin{definition}\label{Definition 4.1}
The pre-simplicial module ${\mathcal M}$ is a collection of modules $M_n$, $n\geq 0$, together with maps,
called  face maps or face operators,
$$d_i: M_n \to  M_{n-1}, \ \ 0\leq i \leq n, $$
such that:
$$d_id_j = d_{j-1}d_i,\ \ 0\leq i < j \leq n, $$
we define a chain complex with chain modules $M_n$ and a boundary map $\partial_{n}: M_n \to M_{n-1}$ given by:
$$\partial_{n}=\sum_{i=0}^n (-1)^i d_i$$
\end{definition}
We are ready to define the pre-Yang-Baxter pre-simplicial module.
\begin{definition}\label{Definition 4.2}(\cite{Leb,Prz-2})
Consider a linear map $R^{W} :M\otimes V \to M,$ such that $R^{W}\circ (R^{W}\otimes id_{V})=R^{W}\circ (R^{W}\otimes id_{V})\circ (id_{M}\otimes R)$ as shown graphically in Figure 4.1, we call this the left wall condition.\ \\

\centerline{\includegraphics[scale=0.7]{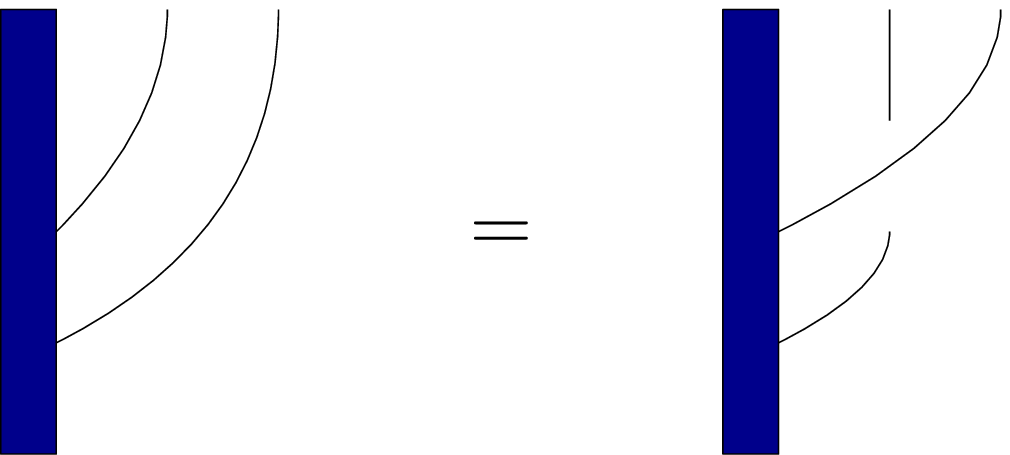} }
\centerline{\footnotesize{Figure. 4.1; the left wall condition}}\ \\

Let $C_{n}=M\otimes V^{\otimes n}$ and the face map $d_{i}=d_{i,n}:C_{n}\to C_{n-1}$ is defined by 
$$d_{i}=$$ $$(R^{W}\otimes id^{\otimes n-1})\circ (id_{M}\otimes R \otimes id^{\otimes n-2})\circ (id_{M}\otimes id\otimes R\otimes id^{\otimes n-3})\circ ... \circ (id_{M}\otimes id^{\otimes i-2}\otimes R\otimes id^{n-i})$$
 We can interpret the face maps through Figure 4.2


\centerline{ \includegraphics[scale=0.7]{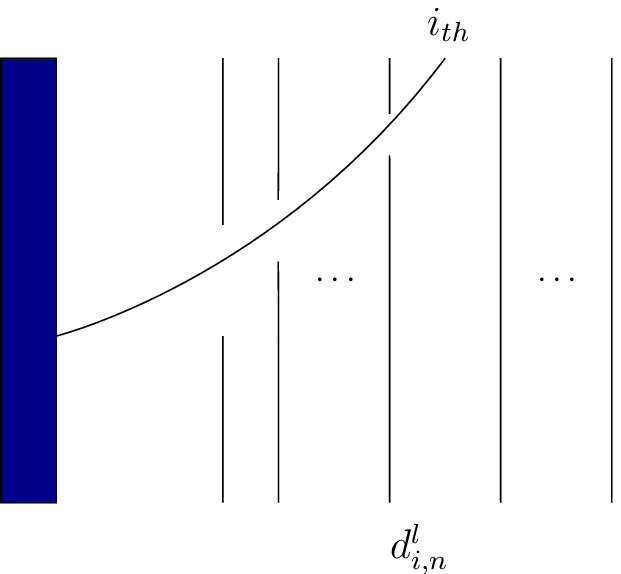}  } 
\centerline{\footnotesize{Figure. 4.2; face map $d_{i,n}$}} 

\ \\
$(C_{n},M,d_{i})$ is a pre-simplicial module, and $(C_{n},M,\partial_{n})$ is the one-term pre-Yang-Baxter chain complex.  It's homology is called the one-term pre-Yang-Baxter homology $H_{n}(R,R^{W}).$
\end{definition}

\begin{proposition}\label{Proposition 4.3}
Let $(C_{n},M,\partial_{n})$ be the chain complex of the one-term pre-Yang-Baxter homology.  For a fixed element $v\in V,$ consider the map $f_{n}:C_{n} \to C_{n}$, defined by $f_{n}(a)=d_{n+1,n+1}(a \otimes v)$, where $a \in C_{n},$ then this is a chain map. We have 
\begin{enumerate}
\item[(1)]$(f_{n})_{*}(H_{n})=0$.
\item[(2)]If there is an element $v \in V$ such that $f_{n}$ is invertible,  then the one-term pre-Yang-Baxter homology is trivial.
\end{enumerate}

\end{proposition}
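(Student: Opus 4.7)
The plan is to prove the three assertions—that $f_n$ is a chain map, that $(f_n)_* = 0$ on $H_n$, and that invertibility of $f_n$ forces the homology to vanish—by exploiting two elementary observations. The first is the pre-simplicial identity $d_i d_j = d_{j-1} d_i$ for $i < j$, specialized to $j = n+1$, which gives $d_i d_{n+1} = d_n d_i$ for every $i \le n$. The second, visible either from the defining composition in Definition 4.2 or from Figure 4.2, is that $d_i$ applied to $C_{n+1}$ with $i \le n$ only acts on the first $i$ tensor slots (via braidings that bring strand $i$ to position $1$, followed by absorption into the wall), so it leaves any appended right-most factor undisturbed: $d_i(a \otimes v) = d_i(a) \otimes v$ for $a \in C_n$.

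With these in hand the chain-map property is a short calculation. I would expand $\partial_n f_n(a) = \sum_i (-1)^i d_i d_{n+1}(a \otimes v)$, use the simplicial identity to pull $d_n$ outside the sum, and collapse what remains inside to $\partial_n(a) \otimes v$ via the second observation. The conclusion $\partial_n f_n(a) = d_n(\partial_n(a) \otimes v) = f_{n-1}(\partial_n(a))$ is then immediate, establishing that $f$ commutes with $\partial$.

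Claim (1) will fall out of the same bookkeeping applied to $\partial_{n+1}(a \otimes v)$ rather than to $\partial_n f_n(a)$. Splitting off the $i = n+1$ term gives $\partial_{n+1}(a \otimes v) = \partial_n(a) \otimes v + (-1)^{n+1} f_n(a)$, so for a cycle $a$ the first summand vanishes and $f_n(a) = (-1)^{n+1}\partial_{n+1}(a \otimes v)$ is a boundary; hence $(f_n)_*(H_n) = 0$.

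For claim (2), assuming $f_n$ is invertible as a $k$-module map in every degree (for the same $v$), I would conjugate the chain-map relation $\partial_n f_n = f_{n-1} \partial_n$ by the inverses to see that $f^{-1}$ is itself a chain map. Then $f$ is a chain isomorphism, so $(f_n)_*$ is an automorphism of $H_n$; combined with $(f_n)_* = 0$ from (1), this forces $H_n = 0$. The principal obstacle in the whole argument is the bookkeeping that justifies the second observation—that $d_{i,n+1}$ for $i \le n$ genuinely leaves the last tensor factor alone—because both (1) and (2) depend on it; once that is carefully pinned down from Definition 4.2 or the graphical picture, every remaining manipulation is routine.
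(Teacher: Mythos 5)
Your proposal is correct and follows essentially the same route as the paper: the identity $\partial_{n+1}(a\otimes v)=\partial_n(a)\otimes v+(-1)^{n+1}f_n(a)$ that you use for (1) is exactly the paper's chain homotopy $P_n(a)=(-1)^n\,a\otimes v$ between $\pm f_n$ and the zero map, both resting on the same key observation that $d_{i,n+1}(a\otimes v)=d_{i,n}(a)\otimes v$ for $i\le n$, and (2) is handled identically (an invertible chain map inducing zero on homology forces $H_n=0$). The only cosmetic difference is that you verify the chain-map property directly from the pre-simplicial identity, whereas the paper gets it for free from the homotopy formula.
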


\begin{proof}
We construct a chain homotopy $P_{n}$ between $(-1)^{n+1}f_{n}$ and zero map, in particular showing that $f_{n}$ is a chain map.  The chain homotopy $P_{n}=C_{n} \to C_{n+1}$ is defined by $P_{n}(v_{i_{1}}\otimes ...\otimes v_{i_{n}})=(-1)^{n}v_{i_{1}}\otimes ...\otimes v_{i_{n}}\otimes v.$ We check: $\partial_{n+1}P_{n}+P_{n-1}\partial_{n}=\sum_{i=1}^{n+1}(-1)^{i}d_{i,n+1}P_{n}+\sum_{j=1}^{n}(-1)^{j}P_{n}d_{i,n}=(-1)^{n+1}d_{n+1,n+1}P_{n}=(-1)^{n+1}f_{n}.$ $(1)$ follows because $\{f_{n}\}$ is chain homotopic to the $0$ map, therefore, $(f_{n})_{*}$ is the $0$ map on homology.  $(2)$ follows since if $f_{n}$ is invertible, so is $(-1)^{n+1}f_{n},$ thus  $H_{n}(C_{*})$ is isomorphic to $H_{n}(C_{*})$ through zero map.  This shows that $H_{n}(C_{*})={0}.$

\end{proof}

In the case that $M=k,$ and of $V$ acting on $k$ trivially, the condition making $(V^{\otimes n},d_{i,n})$ a pre-simplicial module is equivalent to that the sum of each column of the $R$ matrix is $1$, which we call the column unital condition (e.g. stochastic matrices satisfy the condition).

\begin{corollary}
Let $M=k,$ $V$ act on $k$ trivially, and $R$ be a set-theoretic Yang-Baxter operator. If for any pair $(B,D),$ there is a unique $A$ such that $R(A,B)=(R_{1}(A,B),R_{2}(A,B))=(C,D),$ then conditions in Proposition 4.3 hold.
In particular, biracks satisfy this condition (see Definition 3.1 condition 3, that is right invertibility, in \cite{CES}).
\end{corollary}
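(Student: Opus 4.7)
The plan is to apply Proposition 4.3(2) by exhibiting an element $v \in V$ for which the chain map $f_n: C_n \to C_n$ is invertible, which will yield triviality of the one-term homology. Since $M = k$ and $V$ acts trivially, the wall map $R^W: k \otimes V \to k$ is the augmentation $1 \otimes w \mapsto 1$, so the outer $R^W$ in the composition defining $d_{n+1,n+1}$ simply deletes the leftmost tensor factor; identifying $C_n = k \otimes V^{\otimes n} \cong V^{\otimes n}$, the question reduces to a purely $R$-driven map on $V^{\otimes n}$.

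First I would pick $v$ to be any fixed element of $X$ and trace $f_n$ through the composition. Reading $d_{n+1,n+1}$ from the innermost $R$ outward matches Figure 4.2 with the wall discarding the leftmost strand: the appended $v$ is bubbled leftward through $n$ applications of $R$, after which the leftmost factor is dropped. Concretely, on a basis tensor $(x_1, \ldots, x_n) \in X^n$, setting $z_{n+1} = v$ and recursively $z_k = R_1(x_k, z_{k+1})$, $y_k = R_2(x_k, z_{k+1})$ for $k$ from $n$ down to $1$, the output is $f_n(x_1, \ldots, x_n) = (y_1, \ldots, y_n)$.

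Next I would invert $f_n$ on $X^n$ by peeling off coordinates from right to left. The equation $y_n = R_2(x_n, v)$ together with the hypothesis applied to the pair $(B, D) = (v, y_n)$ determines $x_n$ uniquely, and then $z_n = R_1(x_n, v)$ is computed directly. Inductively, once $z_{k+1}$ is known, the equation $y_k = R_2(x_k, z_{k+1})$ has a unique solution $x_k$ by the hypothesis applied to $(z_{k+1}, y_k)$, and $z_k = R_1(x_k, z_{k+1})$ follows. Thus $f_n$ is a bijection of $X^n$, extending $k$-linearly to an automorphism of $V^{\otimes n}$, so Proposition 4.3(2) gives $H_n(R, R^W) = 0$. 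The birack remark is immediate because right-invertibility in the required sense is condition (3) of Definition 3.1 in \cite{CES}. The main obstacle is nothing conceptual, only the bookkeeping of the bubble recursion and the verification that each inversion step is exactly an $R_2(A, B) = D$ instance of the hypothesis.
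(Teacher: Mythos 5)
Your proposal is correct and follows essentially the same route as the paper: both identify $f_n$ on a basis tuple as the ``bubble'' recursion $z_{n+1}=v$, $z_k=R_1(x_k,z_{k+1})$, $y_k=R_2(x_k,z_{k+1})$, and then invert it coordinate-by-coordinate from the right using the unique solvability of $R_2(A,B)=D$ for given $(B,D)$, concluding via Proposition 4.3(2). The only difference is that you spell out the identification $C_n\cong V^{\otimes n}$ and the passage from a bijection of $X^n$ to an automorphism of $V^{\otimes n}$, which the paper leaves implicit.
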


\begin{proof}
We need to show that for any $n-$tuple $(y_{1},...,y_{n})$ in $C_{n},$ there exists unique $n-$tuple $(x_{1},...,x_{n})$ in $C_{n}$ such that $f_{n}((x_{1},...,x_{n}))=d_{n+1,n+1}((x_{1},...,x_{n},v)=(y_{1},...,y_{n}).$  Since we know $y_{n}$ and $v$, we get the values of $x_{n}$ and $R_{1}(x_{n},v)$ uniquely.  Once we have $R_{1}(x_{n},v),$ together the value of $y_{n-1},$ we get the value of $x_{n-1}$ and $R_{1}(x_{n-1},R_{1}(x_{n},v))$ uniquely.  Thus by this iteration, we get the $n-$tuple $(x_{1},...,x_{n})$ uniquely and this shows $f_{n}$ is invertible.
\end{proof}

\begin{example}\label{Example 4.2.1}
(Compare \cite{CES}) Let $F$ be a commutatinve ring with identity.  Let $k=F[s^{\pm 1},t^{\pm 1}]/(1-s)(1-t),$ then $$R(x,y)=(R_{1}(x,y),R_{2}(x,y))=((1-s)x+ty,sx+(1-t)y)$$ is a set-theoretic Yang-Baxter operator satisfying the conditions in Corollary 4.4.  This holds because for any given $y$ and $a=R_{2}(x,y)=sx+(1-t)y$, we can solve $x=s^{-1}(a-(1-t)y).$  Thus the one-term homology of this operator is trivial.
\end{example}

\begin{remark}
Pre-Yang-Baxter coming from racks $(X,*)$ where $R(a,b)=(b,a*b)$ satisfies the conditions in Proposition 4.3.  Thus it has zero one-term homology(see \cite{Prz-1}).
\end{remark}
\subsection{Two-term Yang-Baxter homology} 

Let $k$ be a commutative ring with identity, $V=kX$ be a free $k$ module with basis $X$, $M$ be a right $k-$module and $N$ be a left $k-$module.  We define  in Definition 4.8 the two-term pre-Yang-Baxter chain complex $\mathcal{C}^{YB}=(C_{n},M,N,\partial_{n})$ from the pre-cubical module $(C_{n},M,N,d_{i}^{\epsilon})$. 

\begin{definition}\label{Definition 4.1}
The pre-cubical module ${\mathcal M}$ is a collection of modules $M_n$, $n\geq 0$, together with maps,
called  face maps or face operators,
$$d_i^\epsilon : M_n \to  M_{n-1}, \ \ 1\leq i \leq n, \epsilon=0,1$$
such that:
$$d_i^\epsilon d_j^\delta = d_{j-1}^\delta d_i^\epsilon,\ \ 1\leq i < j \leq n, \epsilon, \delta=0,1$$
we define a chain complex with chain groups $M_n$ and a boundary map $\partial_{n}: M_n \to M_{n-1}$ given by:
$$\partial_{n}=\sum_{i=1}^n (-1)^i (d_i^0 - d_i^1).$$
\end{definition}
We are ready to define the pre-Yang-Baxter pre-cubical module.
\begin{definition}\label{Definition 4.2}(\cite{Leb,Prz-2})
Consider a linear map $R^{W}_{l} :M\otimes V \to M,$ such that $R^{W}_{l}\circ (R^{W}_{l}\otimes id_{V})=R^{W}_{l}\circ (R^{W}_{l}\otimes id_{V})\circ (id_{M}\otimes R),$ and a linear map $R^{W}_{r} :V\otimes N \to N,$ such that $R^{W}_{r}\circ (id_{V}\otimes R^{W}_{r})=R^{W}_{r}\circ (id_{V}\otimes R^{W}_{r})\circ (R\otimes id_{N})$ we call them the left and right wall conditions(graphically see Figure 4.3 and Figure 4.4).\ \\

\centerline{\includegraphics[scale=0.7]{lcondition.eps} }
\centerline{\footnotesize{Figure. 4.3; left wall condition}}\ \\

\centerline{\includegraphics[scale=0.7]{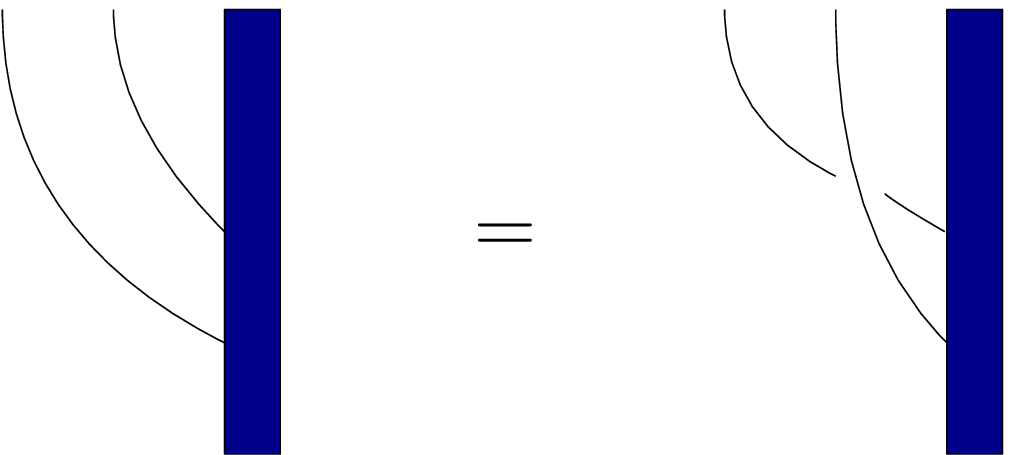}  }
\centerline{\footnotesize{Figure. 4.4; right wall condition}}\ \\

Let $C_{n}=M\otimes V^{\otimes n}\otimes N$ and face maps $d_{i}^{\epsilon}: C_{n}\to C_{n-1}$ are given by 
$$d_{i}^{l}=(R^{W}_{l}\otimes id^{\otimes n-2}\otimes id_{N})\circ (id_{M}\otimes R \otimes id^{\otimes n-1}\otimes id_{N})\circ (id_{M}\otimes id\otimes R\otimes id^{\otimes n-3}\otimes id_{N})\circ ... $$
$$...\circ (id_{M}\otimes id^{\otimes i-2}\otimes R\otimes id^{\otimes n-i}\otimes id_{N})$$ 
and
$$d_{i}^{r}=(id_{M}\otimes id^{\otimes n-1}\otimes R^{W}_{r})\circ (id_{M}\otimes id^{\otimes n-2}\otimes R\otimes id_{N})\circ (id_{M}\otimes id^{\otimes n-3}\otimes R\otimes id\otimes id_{N})\circ...$$
$$...\circ (id_{M}\otimes id^{\otimes i-1} \otimes R\otimes id^{\otimes n-i-1}\otimes id_{N})$$

 We can interpret the face maps $d_{i}^{l},$ $d_{i}^{r}$ and their difference through Figure 4.5.\ \\

 \centerline{\includegraphics[scale=0.7]{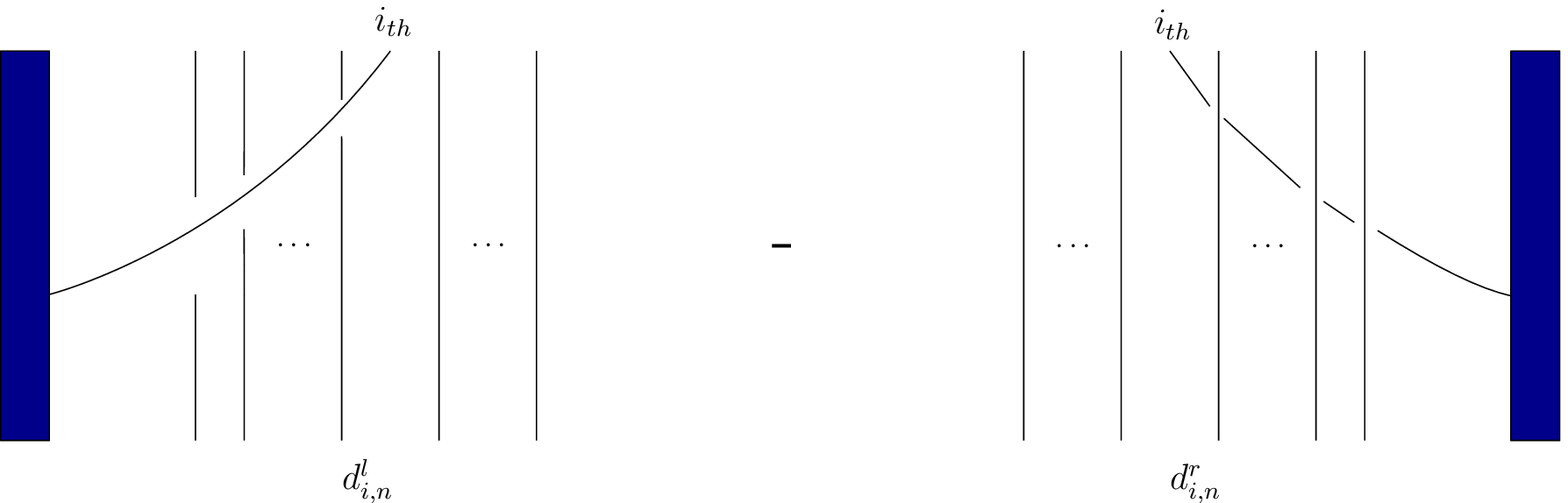} }
\centerline{\footnotesize{Figure. 4.5; face map $d_{i,n+1}$}}\ \\

$(C_{n},M,N,d_{i}^{\epsilon})$ is a pre-cubical module, and $(C_{n},M,N,\partial_{n})$ is the two-term pre-Yang-Baxter chain complex.
\end{definition} 

In the case that $M=k=N,$ and the action of $V$ on $k$ is the trivial action, the conditions making $(V^{\otimes n},d_{i,n}^{\epsilon})$ a pre-cubical module is equivalent to saying that $R$ has the column unital condition.

\begin{example}\label{Example 4.2.1}
We give a family of unital Yang-Baxter operator $R:V\otimes V \to V\otimes V,$ where $V=k\left\{v_{1},...,v_{m}\right\},$ $k=\mathbb{Q}[y,y^{-1}]$ and $m$ is a positive integer.  For any given $m$, $R$ can be represented by its coefficients,
$$R_{i j}^{k l}=\left\{
                  \begin{array}{ll}
                    1, & \hbox{if i=j=k=l;} \\
                    1, & \hbox{if l=i$>$j=k;} \\
                    y^2, & \hbox{if l=i$<$j=k;} \\
                    1-y^2, & \hbox{if k=i$<$j=l;} \\
                    0, & \hbox{otherwise.}
                  \end{array}
                \right.$$
                
These family of Yang-Baxter operators are unital, for example when $m=2$, it is 
$$
 \left[
 \begin{matrix}
   1 & 0 & 0 & 0 \\
   0 & 1-y^{2} & 1 & 0 \\
   0 & y^{2} & 0 & 0 \\
   0 & 0 & 0 & 1
  \end{matrix}
  \right]
$$
\end{example}

Computation shows interesting pattern in the two-term Yang-Baxter homology of this family of Yang-Baxter operators.  

\begin{conjecture}(\cite{Wan})
When $m=2,$ $H_{n}=k^{2}\bigoplus (k/(1-y^{2}))^{a_{n}}\bigoplus (k/(1-y^{4}))^{s_{n-2}},$ where $s_{n}=\Sigma_{i=1}^{n+1}f_{i}$ is the partial sum of Fibonacci sequence, where $f_{1}=1=f_{2}$ and $a_{n}$ is given by $2^{n}=2+a_{n-1}+s_{n-2}+a_{n}+s_{n-1}$ with $a_{1}=0.$  We verified the conjecture for $n\leq 10.$
\end{conjecture}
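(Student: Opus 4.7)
The plan is to exploit the fact that the matrix $R$ preserves the multi-degree on $V^{\otimes n}$. Writing $V = k v_1 \oplus k v_2$, one checks from the given matrix that $R(v_i \otimes v_i) = v_i \otimes v_i$ for $i=1,2$ and that $R$ preserves the two-dimensional block $k v_1 \otimes v_2 \oplus k v_2 \otimes v_1$. Consequently $C_n = V^{\otimes n}$ splits as a direct sum of sub-complexes indexed by the weight $(n_1, n_2)$ with $n_1 + n_2 = n$, counting occurrences of $v_1$ and $v_2$, and the conjectured decomposition of $H_n$ should be recovered weight-by-weight. The extreme weights $(n,0)$ and $(0,n)$ are rank one: because $R$ fixes $v_i \otimes v_i$, the two face maps coincide on $v_1^{\otimes n}$ and $v_2^{\otimes n}$, so $\partial$ vanishes on them; they are not boundaries because any chain whose boundary lies in an extreme weight must itself be supported in that weight, where the differential is trivial. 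This produces the free summand $k^2$ in every $H_n$.

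For the mixed weights I would argue by induction on $n$, using the fact that on the two-dimensional mixed block $R$ satisfies the Hecke-type quadratic relation $R^2 = (1-y^2) R + y^2 \mathrm{Id}$, verifiable directly from the given matrix. Inverting $1-y^4 = (1-y^2)(1+y^2)$ in $k$ diagonalizes $R$ with eigenvalues $1$ and $-y^2$, and a standard spectral decomposition argument then shows the complex becomes a direct sum of acyclic pieces away from these primes; hence every torsion class is annihilated by $1-y^4$, cleanly accounting for the two factors $k/(1-y^2)$ and $k/(1-y^4)$ in the conjecture. The Fibonacci partial sum $s_{n-2}$ strongly suggests that the $k/(1-y^4)$-torsion classes are indexed by binary words in $\{v_1, v_2\}^n$ whose local pattern of $v_1 v_2$-transitions satisfies a Fibonacci-type avoidance rule; one would produce explicit cycles along these lines and compute their annihilators directly.

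The recurrence $2^n = 2 + a_{n-1} + s_{n-2} + a_n + s_{n-1}$ defining $a_n$ is then an Euler-characteristic consistency check, so the residual task is to construct explicit cycles generating the $k/(1-y^2)$ summands and to verify that the Smith normal form of the boundary matrices in each weight has elementary divisors lying only in $\{1, 1-y^2, 1-y^4\}$. The main obstacle will be organizing the boundary maps combinatorially across all intermediate weights simultaneously, since $d_i^l$ and $d_i^r$ mix binary words in ways that do not respect natural monomial orderings, making a direct Smith normal form computation explode in complexity even though it is exactly what underlies the numerical verification through $n \leq 10$. A promising technical route is to identify each weight-block chain complex with one built from the Iwahori-Hecke algebra $\mathcal{H}_n(y^2)$ acting in its tensor representation on $V^{\otimes n}$, reducing the calculation to a Hochschild-type question over a deformation of $k[S_n]$, for which the specializations $y^2 = 1$ and $y^4 = 1$ are precisely the loci where torsion can appear.
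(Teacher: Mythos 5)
The statement you are trying to prove is labelled a \emph{conjecture} in the paper: the authors offer no proof, only machine verification for $n\leq 10$ (deferred to the second author's thesis). So there is no proof of record to compare against, and your proposal has to stand on its own. It does not: it is a research plan whose first structural step is already incorrect. You claim that $C_n=V^{\otimes n}$ splits as a direct sum of subcomplexes indexed by the weight $(n_1,n_2)$. It is true that $R$ preserves weight, but the face maps $d_i^l,d_i^r$ end by applying the wall maps, which for $M=N=k$ with trivial action are augmentations $v_j\mapsto 1$; the absorbed strand can carry either $v_1$ or $v_2$ depending on the term. For instance $d_2^l(v_1\otimes v_2)=(1-y^2)v_2+y^2v_1$, which has components in both weights $(0,1)$ and $(1,0)$. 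So the differential sends weight $(n_1,n_2)$ into $(n_1-1,n_2)\oplus(n_1,n_2-1)$: you get at best a filtration and a spectral sequence, not a direct sum of subcomplexes. This also undercuts your argument for the free summand $k^2$: a chain with $\partial c=v_1^{\otimes n}$ need not be supported in the extreme weight (its components in adjacent weights merely have to have cancelling boundaries), and in any case exhibiting two cycles that are not boundaries does not by itself show they span a free direct summand; over the PID $k=\mathbb{Q}[y^{\pm1}]$ that requires computing the generic rank of $H_n$.

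The later steps are stated as intentions rather than arguments. Your observation that $R$ satisfies the Hecke relation $R^2=(1-y^2)R+y^2\,\mathrm{Id}$ with eigenvalues $1$ and $-y^2$ is correct and consistent with the paper's remark that these operators are column-normalized Jones/Homflypt $R$-matrices, and your reading of the recurrence $2^n=2+a_{n-1}+s_{n-2}+a_n+s_{n-1}$ as an Euler-characteristic/rank bookkeeping identity is also right. But ``a standard spectral decomposition argument shows the complex becomes a direct sum of acyclic pieces'' is not an argument: the crossings at different positions and the wall maps do not commute, so diagonalizing $R$ on one block does not decompose the differential, and even granting it you would only learn that torsion is killed by a \emph{power} of $1-y^4$, not that the elementary divisors are exactly $1-y^2$ and $1-y^4$ with the conjectured Fibonacci multiplicities. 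The explicit cycles ``one would produce'' are never produced. As written, the proposal neither proves the conjecture nor reduces it to a well-posed finite computation beyond the $n\leq 10$ verification the paper already reports.
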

 What fascinates us is that this family of Yang-Baxter operators come from the Yang-Baxter operators giving $sl_{m}$ polynomial invariants of links (substitutions to the Homflypt polynomial).  For example, when $m=2$, the matrix is

$$
 \left[
 \begin{matrix}
   -q & 0 & 0 & 0 \\
   0 & q^{-1}-q & 1 & 0 \\
   0 & 1 & 0 & 0 \\
   0 & 0 & 0 & -q
  \end{matrix}
  \right]
$$

If we divide elements in each column by the sum of those elements and make the substitution $y=(1+q^{-1}-q)^{-1/2},$ we will get the matrix in Example 4.9.  In general, we can get our family in Example 4.9 in a similar way (normalizing columns) and again they are Yang-Baxter operators.  More interestingly, this new family of Yang-Baxter operators also provide $sl_{m}$ polynomial invariants of links \cite{Wan}. This fact is implicit in \cite{Jon-2}.

 \subsection{Cocycle invariant from Yang-Baxter homology} 
 
 In the paper \cite{CES} where Carter, Elhamdadi and Saito define their set-theoretic Yang-Baxter homology, they also define a 2-cocycle link invariant from the set-theoretic Yang-Baxter homology.  This can also be done in the case of column unital Yang-Baxter operators.  In \cite{Prz-3}, it demonstrates the third Reidemeister move preserve the (co)homology of the (co)cycle constructed form a knot diagram. See Figure 4.6 and \cite{Prz-3} for details.

\centerline{\includegraphics[scale=0.7]{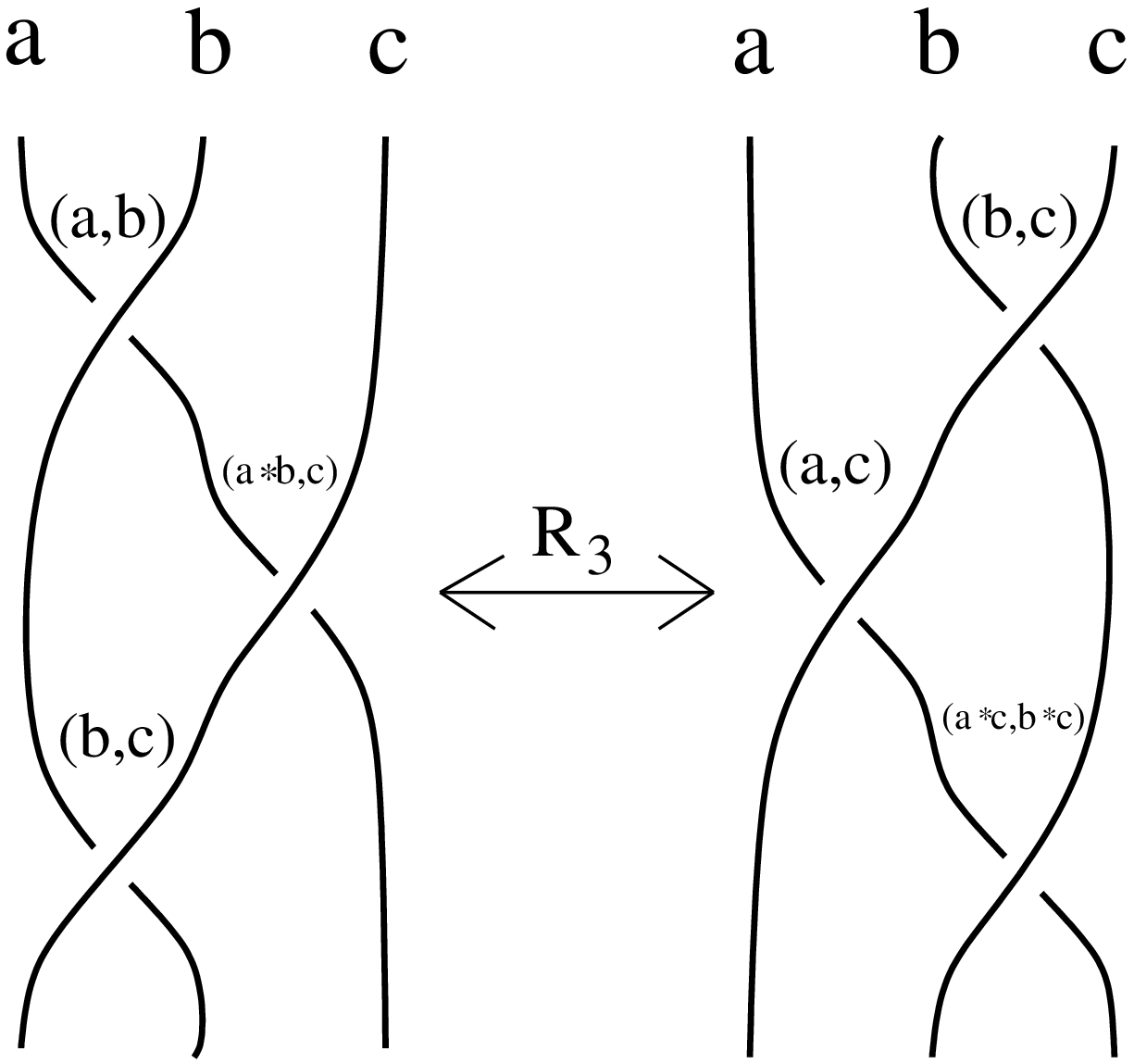}} 
\centerline{\footnotesize{Figure. 4.6; relation between third Reidemeister move and 2-(co)cycle condition}}

 \end{document}